\documentclass[11pt]{amsart}
\usepackage{amsmath,amssymb,graphicx,graphics}
\usepackage[dvipdfmx]{overpic}

\setlength{\topmargin}{-10mm}
\setlength{\oddsidemargin}{0mm}
\setlength{\evensidemargin}{0mm}
\setlength{\textwidth}{150mm}
\setlength{\textheight}{220mm}
\newtheorem{lem}{Lemma}[section]
\newtheorem{conj}[lem]{Conjecture}
\newtheorem{defn}[lem]{Definition}
\newtheorem{thm}[lem]{Theorem}
\newtheorem{cor}[lem]{Corollary}
\newtheorem{que}[lem]{Question}

\newtheorem{prop}[lem]{Proposition}

\def\bZ{\mathbb Z}
\def\N{\mathcal{N}}

\def\v{{\bf v}}

\begin{document}
\title{The third term in lens surgery polynomials}
\author{Motoo Tange}
\thanks{The author was partially supported by JSPS KAKENHI Grant Number 17K14180.}
\subjclass{57M25,57M27}
\email{tange@math.tsukuba.ac.jp}
\keywords{lens space surgery, Alexander polynomia, non-zero curve}
\address{Institute of Mathematics, University of Tsukuba,
 1-1-1 Tennodai, Tsukuba, Ibaraki 305-8571, Japan}
\date{}
\maketitle
%\section{The back ground and Comments.}
\begin{abstract}
It is well-known that the second coefficient of the Alexander polynomial of any lens space knot in $S^3$ is $-1$.
We show that the non-zero third coefficient condition of the Alexander polynomial of
a lens space knot $K$ in $S^3$ confines the surgery to the one realized by the $(2,2g+1)$-torus knot,
where $g$ is the genus of $K$.
In particular, such a lens surgery polynomial coincides with $\Delta_{T(2,2g+1)}(t)$.
%The same thing holds in the case of any lens surgery on a homology sphere whose dual knot is a simple $(1,1)$-knot in the lens space. 
\end{abstract}
\section{Introduction}
\subsection{Lens space knots}
If a knot $K$ in a homology sphere $Y$ yields a lens space by an integral Dehn surgery, then 
we call $K$ a {\it lens space knot} in $Y$.
The result obtained by a Dehn surgery is written by $Y_p(K)$.
Hence, the lens space surgery is presented as $Y_p(K)=L(p,q)$.
The homology class represented by the dual knot of the surgery is identified with an element $k$ in $(\bZ/p\bZ)^\times $.
Precisely it is explained in Section~\ref{pandp}.
The pair $(p,k)$ is called a {\it lens surgery parameter}.

We call a polynomial $\Delta(t)$ {\it lens surgery polynomial (in $Y$)} if 
there exists a lens space knot $K$ in $Y$ such that $\Delta(t)=\Delta_K(t)$.
It is well-known that any lens surgery polynomials have interesting properties.

In \cite{os}, Ozsv\'ath and Szab\'o proved that any lens surgery polynomials in $S^3$ are
flat and alternating.
If the absolute values of all coefficients of a polynomial are smaller than or equal to $1$, we call the polynomial {\it flat}.
If the non-zero coefficients of a polynomial are alternating sign in order, 
then we call the polynomial {\it alternating}.
We call a polynomial $\Delta$ {\it trivial}, if $\Delta=1$.

Any lens space knot with trivial Alexander polynomial in $S^3$ is isotopic to the unknot due to \cite{KM}.
Generally, if $K$ is a lens space knot, then the degree of the Alexander polynomial coincides with the Seifert genus $g$.

In this paper we use the following notations for coefficients of any lens surgery polynomial:
$$\Delta(t)=t^{-g}\sum_{i=0}^{2g}\alpha_it^i=\sum_{i=-g}^ga_it^i.$$
In other words, this equality implies $\alpha_i=a_{i-g}$.
By the symmetry of Alexander polynomial we obtain $a_{i}=a_{-i}$ and $\alpha_{i}=\alpha_{2g-i}$.

We consider non-trivial lens surgery polynomials from now.
Then due to the author \cite{Tan1} and Hedden and Watson \cite{H}, any lens surgery polynomial in $S^3$ becomes the following form around the top coefficient $t^g$:
$$\Delta=t^g-t^{g-1}+\cdots.$$
In \cite{H}, it is shown that any L-space knot has the same form.
Namely, the second coefficient from the top has $-1$.

In \cite{Tan1}, it is proven that the second top coefficient of any lens space knot in any L-space homology sphere is $-1$.
\subsection{Third top term of lens space polynomial}
Let $T(p,q)$ be the right-handed $(p,q)$-torus knot.
Teragaito asked an interesting question about the next coefficient:
\begin{que}
\label{tera}
If a non-trivial lens surgery polynomial in $S^3$ has the following form:
$$\Delta=t^g-t^{g-1}+t^{g-2}+\cdots,$$
then does $\Delta$ coincide with $\Delta_{T(2,2g+1)}$?

In other words, if a lens surgery polynomial is not a $(2,2g+1)$-torus knot polynomial for some integer $g$,
then $\alpha_2=0$?.
\end{que}
Here we give an affirmative answer for this question.
\begin{thm}
\label{main}
Teragaito's question (Question~\ref{tera}) is true.
\end{thm}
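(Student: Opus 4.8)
The plan is to combine the known rigidity of the Alexander polynomial of an L-space knot with the arithmetic restrictions that a genuine lens surgery imposes on its parameter $(p,k)$. By Ozsv\'ath--Szab\'o \cite{os}, $\Delta$ has the staircase form $\Delta(t)=t^{-g}\sum_{j=0}^{2r}(-1)^{j}t^{n_{j}}$ with $0=n_{0}<n_{1}<\cdots<n_{2r}=2g$. The normalizations $\alpha_{0}=1$ and $\alpha_{1}=-1$ (from \cite{Tan1,H}) are equivalent to $n_{0}=0,\ n_{1}=1$ (and, symmetrically, $n_{2r-1}=2g-1$), while the hypothesis $\alpha_{2}=1$ says precisely $n_{2}=2$; by the symmetry $n_{j}+n_{2r-j}=2g$ this also forces $n_{2r-2}=2g-2$. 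Thus Theorem~\ref{main} is equivalent to the statement: \emph{if the three lowest exponents of the staircase are consecutive, then every exponent is, i.e.\ $n_{j}=j$ for all $j$}, which identifies $\Delta$ with $\Delta_{T(2,2g+1)}$. This is false for abstract staircases: already at $g=4$ the gap vector $(1,1,2,2,1,1)$ gives a symmetric staircase with $n_{2}=2$ but $n_{3}=4$, and such ``competitors'' proliferate as $g$ grows; so the lens hypothesis is essential and must be fed in.

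The second step is to bring in the dictionary used in \cite{Tan1} to pin down the second coefficient. If $S^{3}_{p}(K)=L(p,q)$ with lens surgery parameter $(p,k)$, then equating the Reidemeister--Turaev torsion of $L(p,q)$ computed from $(p,q)$ with the one computed from $\Delta$ via the surgery formula expresses all coefficients $\alpha_{i}$ --- equivalently the torsion coefficients $t_{i}(K)=\sum_{j\ge 1}ja_{i+j}$, equivalently the staircase --- through the functions $\p{k}{j}{i}$ attached to $(p,k)$. Under this correspondence the low-index data $\alpha_{0},\alpha_{1},\alpha_{2}$ read off the behaviour of the associated non-zero curve near the origin, so the hypothesis $\alpha_{2}=1$ becomes a sharp arithmetic identity on $(p,k)$ that refines the one already produced by $\alpha_{1}=-1$.

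The third step, which is the crux, is the combinatorial analysis of that identity. The aim is to show that the non-zero curve of $(p,k)$, once it has started as a straight segment of slope $\pm1$ of length at least three, has no choice but to continue as that segment all the way across --- i.e.\ a ``jump'' $n_{j+1}>n_{j}+1$ can never be inserted without violating the lens identity --- whence $n_{j}=j$ for all $j$ and $\Delta=\Delta_{T(2,2g+1)}$; in parameter terms one expects to land on $k\equiv\pm2\pmod p$ with $p=4g+1$ or $p=4g+3$, exactly the two lens surgeries realized by $T(2,2g+1)$. The main obstacle is precisely this no-jump propagation: ruling out, uniformly in $g$, the ever-growing family of staircases of $(1,1,2,2,1,1)$-type, which should require a case split according to the shape of the non-zero curve together with an induction on $j$ driven by the recursion relating $\p{k}{j}{i}$, $\p{k}{j-1}{i}$ and the correction terms $\ee,\ef$. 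Once every such jump has been excluded, what remains --- verifying that the surviving parameter families indeed yield $\Delta_{T(2,2g+1)}$ and that its degree equals $g$ --- is routine bookkeeping.
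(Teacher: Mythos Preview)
Your proposal is a plan, not a proof, and the plan misses the mechanism that actually makes the argument go through. You correctly set up the staircase reformulation and correctly point to the non-zero--curve machinery of \cite{Tan1}, but the ``crux'' you describe in Step~3 is left entirely speculative: you anticipate ``a case split \dots\ together with an induction on $j$ driven by the recursion relating $\p{k}{j}{i}$, $\p{k}{j-1}{i}$'', yet no such induction is carried out, and in fact the paper's proof contains no induction on the staircase index at all. The real key is a single arithmetic inequality extracted \emph{locally}. From $\alpha_{0}=1,\ \alpha_{1}=-1,\ \alpha_{2}=1$ (and hence $\alpha_{3}=-1$, since the non-zero curve has no finite ends) one reads off, at a specific lattice point, a $dA$-column pattern $1,-1,1,-1$; the pair $dA_{i,j}=-1,\ dA_{i,j+1}=1$ combined with formula~(\ref{daformua}) forces $p<3k_{2}$. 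This is Lemma~\ref{le}, and it is absent from your outline.

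Once $p<3k_{2}$ is in hand, the proof is a two-case check on the next column of the $A$-matrix --- not a propagation along the whole staircase. Either $A_{i+1,j+1},A_{i+1,j+2}$ are both nonzero (case~(I)), which gives two adjacent zeros in $dA_{i+1,\cdot}$ and contradicts $p<3k_{2}$; or they are both zero (case~(II)), in which case nonvanishing of $A_{i+2,j+1}$ or $A_{i+2,j+2}$ would force $p-2k_{2}\le 2$, hence $p\in\{2k_{2}+1,2k_{2}+2\}$, and the second option collapses to $L(p,1)$, impossible by \cite{KM}. If instead $A_{i+2,j+1}=A_{i+2,j+2}=0$, one again has two adjacent zeros in $dA$, contradiction. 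This finite local analysis replaces your hoped-for induction. Finally, the residual range $-2\le k_{2}-2g\le 1$ (where the $2g-k_{2}\ge 3$ hypothesis fails) is not ``routine bookkeeping'': it is disposed of by invoking the classification Theorems~1.15 and~4.20 of \cite{Tan1}, which single out $T(2,2g+1)$, $T(3,4)$ and $Pr(-2,3,7)$, the last two failing $\alpha_{2}=1$. Your sketch does not supply any of these ingredients, so as written it does not constitute a proof.
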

This theorem is true even if the lens space knot $K$ lies in an L-space homology sphere and satisfies $2g(K)\le p$ by applying the same method.

Theorem 1.15 in \cite{Tan1} gave a criterion for a lens space knot $K$ to satisfy $\Delta_K(t)=\Delta_{T(2,2g+1)}(t)$ for some positive integer $g$.
On the other hand, we can also say that Theorem~\ref{main} gives a new criterion for a lens space knot to have the same Alexander polynomial as that of $T(2,2g+1)$.
\subsection{Realization of lens surgery}
We define the following terminology. 
\begin{defn}
Let $p,k$ be relatively prime positive integers.
If a lens surgery $Y_p(K)=L(p,q)$ in a homology sphere $Y$ has the lens surgery parameter $(p,k)$, then we say that the parameter $(p,k)$ is realized by a lens space knot $K$.

%Let $\Delta(t)$ be a lens surgery polynomial.
%If there exists a relatively prime pair $(p,k)$ and it is realized by a lens space knot $K\subset Y$ with $\Delta_K(t)=\Delta(t)$, then we say that $K$ is a {\it lens space knot} associated with $\Delta(t)$
\end{defn}
%Corollary 3 in \cite{Tan2} immediately leads to the following proposition:
%\begin{prop}
%Let $K_1$ and $K_2$ be a homology sphere $Y$ and $Z$ respectively.
%Suppose that $Y_p(K_1)$ and $Y_p(K_2)$ give the same lens space.
%If the $p$-surgery of $K_1$ has the lens surgery parameter $(p,k)$ and 
%$\Delta_{K_1}=\Delta_{K_2}$, then $(p,k)$ is also realized by $K_2$.
%\end{prop}

\begin{cor}
\label{cor}
Let $K$ be a lens space knot in $S^3$ with the surgery parameter $(p,k)$.
The Alexander polynomial $\Delta_K(t)$ has the following form:
$$\Delta_K=t^g-t^{g-1}+t^{g-2}+\cdots,$$
if and only if $(p,k)$ is realized by $T(2,2g+1)$.
\end{cor}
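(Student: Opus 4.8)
\medskip
\noindent\emph{Proof proposal.} The plan is to deduce the Corollary from Theorem~\ref{main} together with the standard fact, used throughout \cite{os} and recalled in Section~\ref{pandp}, that the Alexander polynomial of a lens space knot is completely determined by its surgery parameter: the values of $\Delta_K$ at the $p$-th roots of unity are the Reidemeister torsion coefficients of the resulting lens space, which depend only on $(p,k)$, and together with the symmetry and normalization of $\Delta_K$ and the bound $2g(K)\le p$ (valid in $S^3$) they recover $\Delta_K(t)$. For the ``if'' direction this is essentially all that is needed: if $(p,k)$ is realized by $T(2,2g+1)$, then the right-handed torus knot $T(2,2g+1)$, via its lens space surgery of slope $p\in\{4g+1,4g+3\}$, is a lens space knot with parameter $(p,k)$, and so is $K$; hence $\Delta_K(t)=\Delta_{T(2,2g+1)}(t)=\sum_{i=-g}^{g}(-1)^{g-i}t^{i}$, so $g(K)=\deg\Delta_K=g$ and, reading off the top three coefficients, $\Delta_K=t^{g}-t^{g-1}+t^{g-2}+\cdots$.

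For the ``only if'' direction, suppose $\Delta_K=t^{g}-t^{g-1}+t^{g-2}+\cdots$. By Theorem~\ref{main} we get $\Delta_K=\Delta_{T(2,2g'+1)}$ for some positive integer $g'$, and comparing degrees forces $g'=g$, so $\Delta_K=\Delta_{T(2,2g+1)}$. It then remains to pass from the polynomial back to the parameter, i.e.\ to show that every parameter $(p,k)$ whose associated lens surgery polynomial equals $\Delta_{T(2,2g+1)}$ is realized by $T(2,2g+1)$. This is the content of the criterion in \cite[Theorem~1.15]{Tan1}, whose hypothesis one checks to be equivalent to ``$(p,k)$ is realized by $T(2,2g+1)$''; alternatively it can be read off from the realization equations, since $\Delta_{T(2,2g+1)}$ has all $2g+1$ of its coefficients equal to $\pm1$, making those equations maximally rigid and forcing $p\in\{4g+1,4g+3\}$ with $k$ the torus-knot value.

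I expect the main obstacle to be exactly this last step. Knowing $\Delta_K$ immediately fixes the genus, but nothing elementary prevents a hypothetical lens space knot with Alexander polynomial $\Delta_{T(2,2g+1)}$ from having its lens space surgery at a slope outside $\{4g+1,4g+3\}$, or at such a slope but with a ``wrong'' $k$; ruling this out is where one must invoke the rigidity of the lens surgery realization problem as developed in \cite{os} and \cite{Tan1}. Once the parameters producing $\Delta_{T(2,2g+1)}$ are identified with precisely those realized by $T(2,2g+1)$, the biconditional of the Corollary follows formally.
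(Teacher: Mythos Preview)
Your proposal is correct and follows essentially the same route as the paper: apply Theorem~\ref{main} to obtain $\Delta_K=\Delta_{T(2,2g+1)}$, then invoke \cite[Theorem~1.15]{Tan1} to pin down the surgery parameter as $(p,2)$, which is realized by $T(2,2g+1)$. The paper's proof is terser and treats only the ``only if'' direction explicitly, but your added argument for the ``if'' direction (that the Alexander polynomial is determined by $(p,k)$, via the coefficient formula recalled in Section~\ref{pandp}) is exactly the ingredient the paper leaves implicit.
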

This condition in this corollary is equivalent to the condition of $k=2$.
%Namely, in the case of lens space surgery realized by $T(2,2g+1)$, the Alexander polynomial determines the lens space parameter.
%
%In general, the slope and Alexander polynomial 
%from Corollary 3 in \cite{Tan2} we can immediately prove the following:
%\begin{thm}
%Let $K$ be a lens space knot in $S^3$ with parameter $(p,k)$.
%If $\Delta_K(t)=\Delta_{K'}(t)$ for some lens space knot $K'$ in $S^3$, then 
%the parameter $(p,k)$ is also realized by $K'$.
%\end{thm}
\subsection{The cases of lens space knots $K_{p,k}$ in $Y_{p,k}$}
\label{KpkYpk}
Consider a simple $(1,1)$-knot in a lens space yielding a homology sphere by some integer slope.
The `simple' is defined in \cite{st} and \cite{Tan3}.
If such a $(1,1)$-simple knot generates the 1st homology of the lens space, 
we can always find such a slope.
Hence any simple $(1,1)$-knot is parameterized by a relatively prime integers $(p,k)$. 
The dual knot is a lens space knot in the homology sphere.
The dual knot is denoted by $K_{p,k}$ and the homology sphere by $Y_{p,k}$.
The reader should probably understand these facts by reading \cite{st} and \cite{Tan3}.
The main result in \cite{IST} gave a formula of the Alexander polynomial of $K_{p,k}$ by using $p,k$.
Here we give the following conjecture:
\begin{conj}
\label{11knot}
If the third top term of the symmetrized Alexander polynomial $\Delta_{K_{p,k}}(t)$ is non-zero, then $\Delta_{K_{p,k}}(t)$ coincides with $\Delta_{T(2,2g+1)}(t)$ for some integer $g$, in other words, $k=2$ holds.
\end{conj}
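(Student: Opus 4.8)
The plan is to deduce Conjecture~\ref{11knot} from the explicit formula for $\Delta_{K_{p,k}}(t)$ established in \cite{IST}, by carrying out the coefficient analysis of Theorem~\ref{main} without the genus restriction $2g\le p$. First one records, directly from that formula, that $\Delta_{K_{p,k}}(t)$ is flat and alternating and has the form $t^g-t^{g-1}+\cdots$ around its top term (for L-space knots these are the theorems of \cite{os} and \cite{H}). Since it is flat and alternating, the hypothesis $\alpha_2\neq0$ then forces $\alpha_2=1$, i.e.\ $\Delta_{K_{p,k}}=t^g-t^{g-1}+t^{g-2}+\cdots$. So the substance of the conjecture is the implication: if the first three non-zero coefficients of $\Delta_{K_{p,k}}$ occupy the consecutive positions $g$, $g-1$, $g-2$, then $k=2$; the converse is the easy direction.

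For the converse, when $k=2$ one verifies directly from the \cite{IST} formula that $\alpha_i=(-1)^i$ for all $i$, so $\Delta_{K_{p,2}}(t)=t^{g}-t^{g-1}+\cdots+t^{-g}=\Delta_{T(2,2g+1)}(t)$; equivalently $(p,2)$ is realized by $T(2,2g+1)\subset S^3$, so in this case $Y_{p,2}=S^3$ and $K_{p,2}$ is isotopic to $T(2,2g+1)$, and this half is already contained in Corollary~\ref{cor}. For the forward implication, I would rewrite the \cite{IST} formula so that the positions of the non-zero coefficients of $\Delta_{K_{p,k}}$---the vertices of the non-zero curve attached to $(p,k)$---are governed by a residue pattern modulo $p$ determined by $k$, exactly as in the $S^3$ situation of Theorem~\ref{main}; forcing the first three such positions to be $g$, $g-1$, $g-2$ should then pin the pattern down to $k\equiv\pm2\pmod p$, hence to $k=2$ after normalizing $0<k<p$ (the value $k=p-2$ being the mirror representative of the same simple knot).

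The main obstacle is exactly the loss of the inequality $2g\le p$, which is harmless in the $S^3$ setting of Theorem~\ref{main} but can fail for the knots $K_{p,k}$. That inequality is what keeps the exponents of $\Delta$ from wrapping around modulo $p$, so that $\alpha_1$ and $\alpha_2$ are each controlled by a single term of the counting function; when $2g>p$ several residues contribute to one coefficient, and one must rule out that an accidental overlap produces $\alpha_2=1$ for some $k\neq2$. I expect this to need a finer study of the non-zero curve of $(p,k)$: a bound on the location of its first descent, and a proof that any wrap-around forces an earlier vanishing coefficient---hence $\alpha_2=0$---unless the curve is the full staircase of $T(2,2g+1)$, followed by a finite check of the small-$p$ range where $2g>p$ is unavoidable. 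A secondary point is to secure the flat/alternating normalization itself for $K_{p,k}$ in the homology sphere $Y_{p,k}$, which need not be an L-space: rather than extending the Ozsv\'ath--Szab\'o mapping-cone argument to this setting, it should be cleaner to read flatness and the alternating property straight off the closed formula of \cite{IST}.
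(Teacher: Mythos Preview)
The statement you are attempting is \emph{not} proved in the paper: it is explicitly labeled a Conjecture and left open. The paper offers only (i) a computer check for $p\le 600$, (ii) the observation that the conjecture follows from Theorem~\ref{main} under the extra hypothesis $Y_{p,k}\cong S^3$, and (iii) the remark after Lemma~\ref{le} that the non-zero third term forces $p<3k_2$ for arbitrary $(p,k)$. The paper's own suggested route is to show that a non-zero third term already forces $Y_{p,k}\cong S^3$, after which Theorem~\ref{main} finishes the job; it does not attempt the direct coefficient analysis you outline.

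Your proposal is therefore not to be compared with a proof in the paper, and on its own it is a programme rather than a proof. You yourself isolate the two genuine obstacles: the possible failure of $2g(K_{p,k})\le p$, which breaks the periodic extension $\bar a_i=a_{[i]_p}$ underlying the $A$- and $dA$-matrices and hence the entire non-zero-curve argument of Theorem~\ref{main}; and the fact that $Y_{p,k}$ need not be an L-space, so Proposition~\ref{mainp} (single non-zero curve per non-zero region) is unavailable. Neither obstacle is overcome in your sketch: ``a finer study of the non-zero curve'', ``a bound on the location of its first descent'', and ``a finite check of the small-$p$ range'' are placeholders, not arguments, and in particular the set of $(p,k)$ with $2g(K_{p,k})>p$ is not a priori finite, so the promised finite check is not evidently finite. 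Reading flatness and alternation off the \cite{IST} formula is plausible (simple knots are Floer simple), but even granting that, the substantive implication $\alpha_2=1\Rightarrow k=2$ remains open for exactly the reasons above. As written, the proposal does not close the gap the paper leaves open.
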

This conjecture can be easily checked by a computer program ($p\le 600$) based on the formula in \cite{IST}.
Conjecture~\ref{11knot} is true under a little strong condition that $Y_{p,k}$ is homeomorphic to $S^3$, because of Theorem~\ref{main}.
The essential point is what if the third top term of $\Delta_{K_{p,k}}$
is non-zero, then $Y_{p,k}$ is homeomorphic to $S^3$.
Notice that in \cite{Tan1} the author proved that $k=2$ holds if and only if $Y_{p,k}$ is homeomorphic to $S^3$ and $K_{p,k}$ is isotopic to $T(2,2g+1)$ for some integer $g$.
This condition is also equivalent to the equality $\Delta_{K_{p,k}}(t)=\Delta_{T(2,2g+1)}(t)$.
%\begin{thm}
%\label{main2}
%Let $K$ be an lens space knot in $S^3$.
%If $\alpha_2=0$ and $\alpha_3=1$, i.e.,
%$$\Delta_K(t)=1-t+t^3-\cdots,$$
%then the parameter is realized by .
%\end{thm}
\section*{Acknowledgements}
Question \ref{tera} is presented by Masakazu Teragaito's talk in the Mini-symposium ``Knot Theory on Okinawa'' at OIST on February the 17th to 21st, 2020.
I appreciate him for teaching us this question.

\section{Preliminaries and Proofs}
\label{pandp}
\subsection{Brief preliminaries}
Here we define the lens surgery parameter $(p,k)$.
%By using the identification we get an integer $k$ with $[\tilde{K}]=k$.
%Here since $\tilde{K}$ is the generator in $H_1(L(p,q),{\mathbb Z})$, 
\begin{defn}
Let $K$ be a knot in a homology sphere $Y$.
Suppose that $Y_p(K)=L(p,q)$ and the dual knot $\tilde{K}$ has $[\tilde{K}]=k[c]\in H_1(L(p,q),{\mathbb Z})$ for some orientation of $\tilde{K}$.
Here the dual knot is the core knot in the solid torus obtained by the Dehn surgery.
Furthermore, $c$ is either of core circles of genus one Heegaard decomposition of $L(p,q)$.
Then we call $(p,k)$ {\it lens surgery parameter}.
The integer $k$ is called a {\it dual class}.
\end{defn}
If $L(p,q)$ is a Dehn surgery of a homology sphere, the surgery parameter $(p,k)$ is relatively prime and $q=k^2\bmod p$.
Note that we adopt the orientation of $L(p,q)$ as the $p/q$-surgery of the unknot in $S^3$.

The ambiguity of the orientation of $\tilde{K}$ and the choices of the core circles of genus one Heegaard decomposition give (at most) four possibilities of the dual class $k_0,-k_0,k_0^{-1},-k_0^{-1}$ (in ${\mathbb Z}/p{\mathbb Z}$), for some integer $k_0$.
We always take the minimal integer $k$ as a representative satisfying $0< k<p/2$.

For any integer $i$ we define the integer $[i]_p$ to be the integer with $i\equiv [i]_p\bmod p$ and $-\frac{p}{2}<[i]_p\le \frac{p}{2}$.
Let $k_2$ be the absolute value of the integer $[k']_p$ satisfying $kk'\equiv 1\bmod p$.
We call $k_2$ {\it the second dual class}.
We set $kk_2\equiv e\bmod p$, $e=\pm 1$, $m=\frac{kk_2-e}{p}$, $q=[k^2]_p$, $q_2=[(k_2)^2]_p$.
$c=\frac{(k-1)(k+1-p)}{2}$ and for some non-zero integer $\ell$
$$I_\ell:=\begin{cases}\{1,2,\cdots, \ell\}&\ell>0\\\{\ell+1,\cdots, -1,0\}&\ell<0.\end{cases}$$
From these data, we can compute the coefficient $a_i$ due to \cite{Tan2}.
\begin{prop}[Proposition 2.3 in \cite{Tan1}]
Let $K$ be a lens space knot in $S^3$.
For any integer $i$ with $|i|\le p/2$, the $i$-th coefficient of the Alexander polynomial
$$a_i=-e(m-\#\{j\in I_k|[q_2(j+ki+c)]_p\in I_{ek_2}\}).$$
\end{prop}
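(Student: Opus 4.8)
The plan is to recover $\Delta_K(t)$ from the Heegaard Floer correction terms ($d$-invariants) of the surgered lens space, via the large-surgery formula of Ozsv\'ath--Szab\'o. A lens space knot in $S^3$ satisfies $p\ge 2g(K)-1$, so that formula applies: each correction term of $Y_p(K)=L(p,q)$ equals the corresponding correction term of $Y_p(U)=L(p,1)$ minus $2t_i(K)$, where $U$ is the unknot and $t_j(K)=\sum_{\ell\ge 1}\ell\,a_{j+\ell}$ is a torsion coefficient of $K$. Solving for the torsion coefficients gives, with the indexing normalised suitably,
$$t_i(K)=\tfrac12\bigl(d(L(p,1),i)-d(L(p,q),\phi(i))\bigr),$$
where $\phi$ is the affine bijection of $\bZ/p\bZ$ that matches the $\mathrm{Spin}^c$ labelling coming from the surgery with the standard one; it is multiplication by the dual class $k$ followed by the translation by $c=\tfrac{(k-1)(k+1-p)}{2}$, and writing $L(p,q)$ with $q=[k^2]_p$ is where the second dual class $k_2$ and the sign $e$ with $kk_2\equiv e\bmod p$ enter.

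The second step is to make the two families of $d$-invariants explicit. I would apply the Ozsv\'ath--Szab\'o recursion, which expresses $d(L(p,q),i)$ in terms of a quadratic function of $i$ divided by $4pq$ and the correction terms of $L(q,[p]_q)$, and unwind it along the continued-fraction expansion of $p/q$; equivalently one can invoke the combinatorial description of these $d$-invariants used in \cite{Tan2}. Either way $t_i(K)$ comes out as an integer linear combination of an affine term in the index — the source of the constant $m=\tfrac{kk_2-e}{p}$ — and a lattice-point count of the form $\#\{j:[q_2(j+\cdots)]_p\in I_\bullet\}$ with $q_2=[k_2^2]_p$.

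The third step is to pass from the torsion coefficients to the $a_i$ by inverting $t_j(K)=\sum_{\ell\ge 1}\ell\,a_{j+\ell}$, i.e. by the discrete second difference $a_i=t_{i-1}(K)-2t_i(K)+t_{i+1}(K)$ (valid for $|i|\le p/2$, and extended to the remaining residues by the symmetry $a_i=a_{-i}$). Since incrementing the index translates the argument of the count by the dual class $k$, the second difference telescopes the lattice-point count down to a single boundary slice: what survives is the constant $m$ together with the number of $j$ in a window of length $k$ for which $[q_2(j+ki+c)]_p$ falls in a window of length $k_2$ whose orientation is fixed by the sign of $ek_2$, that is, $j\in I_k$ and $[q_2(j+ki+c)]_p\in I_{ek_2}$. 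Collecting the overall sign $-e$ yields
$$a_i=-e\Bigl(m-\#\{\,j\in I_k\mid [q_2(j+ki+c)]_p\in I_{ek_2}\,\}\Bigr).$$

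The main obstacle is the bookkeeping concentrated in Steps~1 and~3: verifying the exact form of the $\mathrm{Spin}^c$-matching $\phi$ (hence the translation $c$ and the sign $e$), and checking that the second-difference telescoping leaves precisely one boundary count with no residual error terms. The cases $e=1$ and $e=-1$, and the behaviour of the count as $|i|$ approaches $p/2$, must be treated separately, but these are arithmetic checks requiring nothing beyond the explicit $d$-invariant formulas.
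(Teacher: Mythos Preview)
The present paper does not prove this proposition: it is quoted without argument from \cite{Tan1} (whose correction-term computations in turn go back to \cite{Tan2}), so there is no proof here against which to compare yours.

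That said, your three-step outline is precisely the route taken in those references. The large-surgery formula gives $t_i(K)=\tfrac12\bigl(d(L(p,1),i)-d(L(p,q),\phi(i))\bigr)$ with the affine $\mathrm{Spin}^c$-matching $\phi$ you describe; the explicit recursion for lens-space $d$-invariants converts this into an arithmetic count in which the constants $m$, $e$, $q_2$ and the shift $c$ appear exactly as you identify them; and the discrete second difference $a_i=t_{i-1}-2t_i+t_{i+1}$ strips one layer of summation to leave the single count over $j\in I_k$ stated in the proposition. What you flag as the main obstacle---pinning down $\phi$ and checking that the telescoping leaves exactly the window $I_k\times I_{ek_2}$ with the sign $-e$ and no residual boundary terms---is real and is where the actual work in \cite{Tan2} sits, but it is indeed arithmetic rather than conceptual. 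Your sketch is correct and faithful to the original argument.
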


To prove Theorem~\ref{main}, we use the {\it non-zero curve} defined in \cite{Tan1}.
First, we extend the coefficients $a_i$ of the Alexander polynomial periodically as $\bar{a}_i=a_{[i]_p}$.
By the estimate in \cite{Gr} proven in \cite{KM}, $\bar{a}_i$ is determined,
where $g(K)$ is the Seifert genus of $K$.

We define {\it $A$-matrix} and {\it $dA$-matrix} as follows:
$$A_{i,j}=\bar{a}_{k_2(i+jek-c)},\ dA_{i,j}=A_{i,j}-A_{i-1,j},$$
where $c=(k-1)(k+1-p)/2$.
%Note we use $k$ and $k_2$ by exchanging the roles comparing the roles in \cite{Tan1}.
Due to the formula (9) in Lemma 2.6 in \cite{Tan1}, we have
\begin{equation}
\label{daformua}
dA_{i,j}=E_{ek_2}(q_2i+k_2(j+e))-E_{ek_2}(q_2i+k_2j)=\begin{cases}1&[q_2i+k_2j]_p\in I_{-k_2}\\-1&[q_2i+k_2j]_p\in I_{k_2}\\0&\text{otherwise.}\end{cases}
\end{equation}

We put $A_{i,j}$ on each lattice point $(i,j)$ in ${\mathbb Z}^2\subset {\mathbb R}^2$.
For a non-zero coefficient $A_{i,j}$ we draw a horizontal positive or negative arrow on $(i,j)$ according to $A_{i,j}=1$ or $-1$ respectively, where a positive (or negative) arrow means a horizontal arrow with positive (or negative) in the $i$-direction.
After that, we connect the horizontally adjacent arrows with the same orientation
and compatibly connect arrows around the non-zero $dA_{i,j}$ as in \cite{Tan1}.
Then we can obtain an infinite family of simple curves on ${\mathbb R}^2$ with no finite ends (i.e., they are properly embedded curves in ${\mathbb R}^2$). 
The arrows are not-increasing with respect to the $j$-coordinate.
We call the curves {\it non-zero curves}. 
\begin{prop}[\cite{Tan1}]
\label{mainp}
Any non-zero curve for any lens space knot in $S^3$ is included in a non-zero region $\mathcal{N}$.
In each non-zero region there is a single component non-zero curve.
\end{prop}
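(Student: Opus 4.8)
The plan is to establish the two assertions in turn, drawing on three facts that are already available: the flatness and alternating property of $\Delta_K$ due to \cite{os} (so that every entry $A_{i,j}\in\{0,\pm1\}$, and two horizontally adjacent non-zero entries of opposite sign are always separated by a $0$), the genus estimate of \cite{Gr,KM} (so that $\bar a_i=0$ whenever $|[i]_p|>g$), and the explicit formula (\ref{daformua}), which says that the vertical connectors of a non-zero curve sit exactly over the lattice points $(i,j)$ with $[q_2i+k_2j]_p\in I_{k_2}\cup I_{-k_2}$, descending when this residue lies in $I_{k_2}$ and ascending when it lies in $I_{-k_2}$.

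\textbf{Confinement to $\N$.} I would first fix $\N$ as the region of $\R^2$ cut out by the linear inequalities in $i$ and $j$ that express the genus bound on the argument of $\bar a$, namely $|[k_2(i+jek-c)]_p|\le g$, together with the analogous bounded range governing the connectors via (\ref{daformua}). By the genus estimate every lattice point supporting a non-zero arrow satisfies these inequalities, so each arrow of the non-zero curve lies in $\N$; and since the connectors only join arrows that are themselves non-zero, the whole curve lies in $\N$. The one thing to verify is that the curve does not leak out through a bounding segment of $\N$: along each such segment the relevant entry of $A$, or the relevant transition of $dA$, is forced to vanish, so a curve meeting the boundary would have to terminate there, contradicting the absence of finite ends built into the construction.

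\textbf{Single component.} This is the heart of the matter. For each fixed $j$ one has the row identity $\sum_i dA_{i,j}=\sum_i(A_{i,j}-A_{i-1,j})=0$ because $A_{i,j}=0$ for $|i|$ large; hence along every row the ascending and descending connectors strictly alternate, and, their positions being governed by the single residue $q_2i+k_2j\bmod p$, they in fact occur in a perfectly regular arithmetic pattern. I would then run the following move: starting from any non-zero arrow, follow it to its connector, pass through, and land on the uniquely determined non-zero arrow in the neighbouring row; tracing the simultaneous effect on the residues $[q_2i+k_2j]_p$ and $[k_2(i+jek-c)]_p$ shows that this target is again non-zero and is unique. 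Iterating the descent and the matching ascent, I would show that the set of all non-zero arrows inside a single $\N$ forms one orbit of this move, so they all lie on one curve; equivalently, the move is a single cyclic permutation of the finite index set labelling the arrows in $\N$, and the non-zero curve is its unique cycle.

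The main obstacle is precisely this last point, that the move acts as a single cycle rather than partitioning the arrows into several independent staircases. This is a number-theoretic statement about how the two linear forms $q_2i+k_2j$ and $k_2(i+jek-c)$ interact modulo $p$ on the region $\N$, and I would attack it by induction along the Euclidean algorithm applied to $(p,k)$ (equivalently $(p,k_2)$): each division step should reduce the curve-combinatorics of $(p,k)$ to that of a strictly smaller parameter, for which single-cycleness may be assumed, while the finitely many ``interface'' arrows created at that step are checked directly to keep the orbit connected. Carrying out this reduction uniformly, and in particular making sure that no spurious extra component appears in the boundary rows where the genus cutoff is active, is the delicate part of the argument.
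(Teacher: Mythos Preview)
The paper does not prove this proposition at all: it is quoted verbatim from \cite{Tan1} (the author's earlier Topology Appl.\ paper), and the present paper only recalls the definition of the non-zero region $\mathcal N$ before using the proposition as a black box in the proof of Theorem~\ref{main}. There is therefore no in-paper argument to compare your proposal against.

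On the substance of your sketch: the confinement part is essentially the right idea---non-zero arrows can only sit over lattice points with $|[k_2(i+jek-c)]_p|\le g$, and this is exactly what carves out (the union of translates of) $\mathcal N$. Note, though, that your inequality describes the union of \emph{all} parallel copies of $\mathcal N$ at once; the paper's $\mathcal N$ is a single connected staircase of $2g{+}1$ boxes, and the separate copies are obtained by translating by multiples of $(0,p)$. For the single-component assertion, you correctly locate the difficulty, but the Euclidean-algorithm induction you propose is only a strategy, not an argument: you have not said what the induction hypothesis is, how a division step actually transforms the $(A,dA)$-picture, or why the interface rows cannot spawn a second component. If you want a genuine proof you will need to consult \cite{Tan1} directly, where the non-zero curve machinery is developed in full.
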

Here a non-zero region $\mathcal{N}$ (introduced in \cite{Tan1}) is defined as follows.
First, we consider the union of $2g+1$ box-shaped neighborhoods of a vertical sequent lattice points corresponding to $\alpha_0,\alpha_1\cdots, \alpha_{2g}$.
Two adjacent box neighborhoods are overlapped with a horizontal unit segment.
Next, we take the infinite parallel copies moved by $n\cdot\v$ where $\v$ is the vector $(1,-k_2)$ and $n$ is any integer.
We denote the union of the infinite parallel copies of $\N$ and call it {\it non-zero region}.
Moving a non-zero region $\N$ by $n\cdot (0,p)$ for any integer $n$, we obtain 
infinite non-zero regions on ${\mathbb R}^2$.

The following lemma is important to prove the main theorem.
This is also the case of $m=0$ in Lemma 4.4 in \cite{Tan1}.
\begin{lem}
\label{le}
If there exist integers $i_0,j_0$ such that $dA_{i_0,j_0}=-dA_{i_0,j_0+1}=-1$, then for any integer $i$, there are no two adjacent zeros in the sequence $\{dA_{i,s}|s\in {\mathbb Z}\}$.
\end{lem}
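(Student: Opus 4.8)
The plan is to extract information from formula (\ref{daformua}) about how the sign pattern of $dA_{i,j}$ behaves along a fixed column (fixed $i$, varying $j$), and to show that a ``$-1$ immediately above a $+1$'' in some column forces a density condition on the $\pm1$'s in \emph{every} column, ruling out two adjacent zeros. The first step is to translate the hypothesis $dA_{i_0,j_0}=-1$, $dA_{i_0,j_0+1}=+1$ into the language of the residues: by (\ref{daformua}) this says $[q_2 i_0+k_2 j_0]_p\in I_{k_2}$ while $[q_2 i_0+k_2(j_0+1)]_p\in I_{-k_2}$. Writing $x:=[q_2 i_0 + k_2 j_0]_p$, this is the statement that $x\in I_{k_2}$ and $x+k_2 \equiv$ (something in $I_{-k_2}$) $\bmod p$; since $I_{k_2}=\{1,\dots,k_2\}$ and $I_{-k_2}=\{-k_2+1,\dots,0\}$ (using $e=+1$ here, the $m=0$ case), this pins down $x$ to a narrow window near the top of $I_{k_2}$, and in particular gives a relation between $k_2$, $p$, and the ``wrap'' that occurs when we add $k_2$. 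The key arithmetic fact to isolate is that $2k_2 > p - (\text{small})$, i.e. $k_2$ is close to $p/2$; this is exactly the regime where the torus-knot polynomial $\Delta_{T(2,2g+1)}$ lives, which is why the lemma is the engine of Theorem~\ref{main}.

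Next I would fix an arbitrary column index $i$ and study the sequence $s\mapsto [q_2 i + k_2 s]_p$ as $s$ runs over $\bZ$. Each increment by one in $s$ shifts this residue by $k_2 \bmod p$. Because $k_2$ is close to $p/2$ (from the previous step), two consecutive shifts move the residue by $2k_2\bmod p$, which is \emph{small} in absolute value; so the subsequence obtained by taking every other $s$ moves slowly and sweeps across the intervals $I_{\pm k_2}$ (each of length $k_2\approx p/2$) densely. Concretely, $dA_{i,s}=0$ means $[q_2 i+k_2 s]_p \notin I_{k_2}\cup I_{-k_2} = \{-k_2+1,\dots,k_2\}$, i.e. the residue lies in the complementary arc of length $p-2k_2$, which is small. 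Having two adjacent zeros, $dA_{i,s}=dA_{i,s+1}=0$, would require \emph{both} $[q_2 i+k_2 s]_p$ and that value shifted by $k_2$ to lie in this short arc of length $p-2k_2$; but shifting by $k_2\approx p/2$ takes a point of a short arc near $0$ to a point near $p/2$, which cannot also be in that same short arc unless the arc has length comparable to $p/2$ — contradicting $p-2k_2$ being small. Making the inequalities precise (tracking the exact endpoints and the ``small'' quantity coming from the hypothesis) completes the argument.

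The main obstacle I anticipate is bookkeeping of the boundary cases in (\ref{daformua}): the half-open conventions in the definition of $[\cdot]_p$ and in $I_{\pm k_2}$, the sign $e$, and the precise size of the ``small'' gap $p-2k_2$ extracted from $dA_{i_0,j_0}=-dA_{i_0,j_0+1}=-1$. One has to be careful that the hypothesis gives a \emph{two-sided} bound — it forces $k_2$ neither too small (else no $+1$ just below a $-1$ could occur in that configuration) nor, more subtly, pins the gap $p-2k_2$ tightly enough that the ``short arc'' is genuinely shorter than $k_2$. Once the inequality $p - 2k_2 < k_2$ (equivalently $3k_2 > p$, i.e. $k_2 > p/3$) is in hand — which should follow from the hypothesis since it forces $k_2$ near $p/2$ — the ``no two adjacent zeros'' conclusion is immediate: a zero at $s$ puts the residue in an arc of length $p-2k_2 < k_2$, and adding $k_2$ lands outside that arc. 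So the real work is purely in the modular-arithmetic lemma that the hypothesis implies $3k_2 > p$; everything after that is a one-line pigeonhole.
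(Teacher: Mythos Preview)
Your plan is correct and matches the paper's proof exactly: derive $3k_2>p$ from the hypothesis via formula~(\ref{daformua}), then observe that two adjacent zeros would place two residues $k_2$ apart inside the complementary arc of length $p-2k_2<k_2$, which is impossible. Your intermediate heuristic that $k_2$ is forced ``close to $p/2$'' overshoots --- only $k_2>p/3$ actually follows --- but you correctly land on the operative inequality $3k_2>p$ at the end, and that is all the paper uses.
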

\begin{proof}
We assume the existence of integers $i_0,j_0$.
Let $x$ be $i_0+(j_0-1)ek$.
Using the formula (\ref{daformua}),
we have $[q_2x]_p\in I_{-k_2}$, $[q_2x+k_2]_p\in I_{k_2}$, and $[q_2x+2k_2]_p\in I_{-k_2}$.
Hence, the sequence $[q_2x+sk_2]_p$ starts at $[q_2x]_p$ and returns in $I_{-k_2}$ at $s=2$.
Therefore, we have $p-k_2<(q_2x+2k_2)-q_2x<p+k_2$ and this means $p< 3k_2$.

We suppose $dA_{i,j}=dA_{i,j+1}=0$ for some integers $i,j$.
Then $[q_2(i+jek)]_p,[q_2(i+jek)+k_2]_p\not\in I_{-k_2}\cup I_{k_2}$.
This implies $p-k_2-k_2\ge k_2$.
This contradicts the inequality above.

If for an integer $I$, the sequence $\{dA_{I,s}|s\in {\mathbb Z}\}$ has no adjacent zeros, 
for any integer $i$ the same thing holds because the $\{dA_{i,s}|s\in {\mathbb Z}\}$ is a parallel copies of $\{dA_{I,s}|s\in {\mathbb Z}\}$.
Hence, the desired condition is satisfied.
\hfill$\Box$
\end{proof}
Note that this lemma holds for any relatively prime positive integers $(p,k)$.
Actually, to prove this lemma we do not require that the matrices $A$ and $dA$ come from a lens space knot in $S^3$.
In particular, if any $K_{p,k}$ in $Y_{p,k}$ (defined in Section~\ref{KpkYpk}) has non-zero third term in the Alexander polynomial, then 
$p<3k_2$ holds.
To prove Conjecture~\ref{11knot}, first we should probably classify $(Y_{p,k},K_{p,k})$ in the case of $3k_2<p$.
\subsection{Proof of Theorem \ref{main}}
Let $K$ be a lens space knot with lens surgery parameter $(p,k)$ and with $g=g(K)$.
Suppose that $\alpha_0=1$, $\alpha_{1}=-1$, and $\alpha_{2}=1$.
Now we assume that $2g-k_2\ge 3$.
Since any non-zero curve has no finite ends, $\alpha_{4}=-1$ holds naturally.
Hence, we can assume that
$$\begin{cases}\alpha_0=1\\\alpha_{1}=-1\\\alpha_{3}=1\\\alpha_{4}=-1.\end{cases}\hspace{1cm}(\ast)$$

Let $i,j$ be fixed integers with $k_2(i+jek-c)=-g\bmod p$.
Then $A_{i,j}=\alpha_0=1$.
$A_{i-1,j}=A_{i-1,j+1}=A_{i-1,j+2}=A_{i-1,j+3}=0$, because any non-zero curve is included in a non-zero region $\N$ due to Proposition~\ref{mainp}.

We notice that the assumption of Lemma~\ref{le} is satisfied.
Thus, we have $dA_{i,j}=1$, $dA_{i,j+1}=-1$, $dA_{i,j+2}=1$, and $dA_{i,j+3}=-1$.
The local values for matrices $A$ and $dA$ are drawn in the top pictures in Figure \ref{cases}.

Our situation falls into the following two cases (I), and (II) as in Figure~\ref{on}.\\
{\bf (I)}: $A_{i+1,j+1}=-1$, and $A_{i+1,j+2}=1$\\
{\bf (II)}: $A_{i+1,j+1}=0$, and $A_{i+1,j+2}=0$.\\
%Other possibilities do not occur because all the non-zero curves are closed sets and have no finite end points in ${\mathbb R}^2$.

In the case of (I), we obtain $dA_{i+1,j+1}=dA_{i+1,j+2}=0$.
This contradicts Lemma \ref{le}.

Next, consider the case of (II).
We claim $A_{i+2,j+1}=A_{i+2,j+2}=0$.
If $A_{i+2,j+1}$ or $A_{i+2,j+2}$ is non-zero, then the non-zero term is included in the non-zero region right next to $\N$, because there is only one non-zero curves in any non-zero region (Proposition~\ref{mainp}).
This implies that by seeing the vertical coordinate in ${\mathbb R}^2$, we have
$$p-2k_2\le 2.$$
Since $2k_2<p$, we have $p=2k_2+1$ or $2k_2+2$.
The equality $p=2k_2+1$ means it gives a $(2,2g+1)$-torus knot surgery.
We consider the case of $p=2k_2+2$.
Since $p,k_2$ are relatively prime, $k_2$ is an odd number.
The equality $p=2k_2+2$ can be deformed into $k_2^2-1=\frac{k_2-1}{2}p\equiv 0\bmod p$.
It is a lens space surgery yielding $L(p,1)$.
This is the $k_2=1$ case only due to \cite{KM}.
Thus the claim above is true.

The remaining cases are $-2\le -2g+k_2\le 1$.
By using Theorem 1.15 and Theorem 4.20 in \cite{Tan1},
the cases are realized by the $(2,2g+1)$-torus knot surgeries or
the lens surgery on $T(3,4)$ or $Pr(-2,3,7)$. 
The knots $T(3,4)$ and $Pr(-2,3,7)$ both do not satisfy $(\ast)$.
Thus, our remaining cases satisfying this condition $(\ast)$
give the equality $\Delta_K(t)=\Delta_{T(2,2g+1)}$.
\hfill$\Box$
\begin{figure}[htb]
\begin{overpic}%[grid,tics=5
[width=0.5\textwidth]
{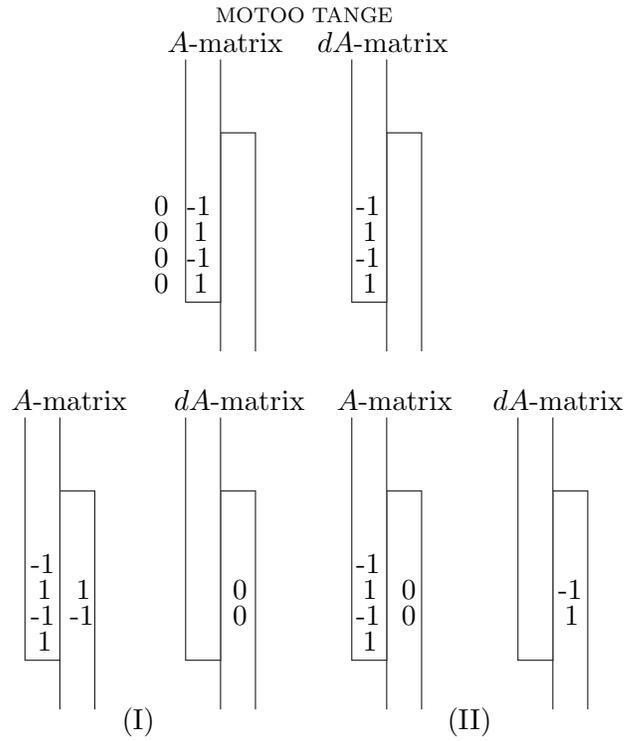}
\put(22,101){$A$-matrix}
\put(26,64){1}
\put(25,68){-1}
\put(26,72){1}
\put(25,76){-1}
\put(20,64){0}
\put(20,68){0}
\put(20,72){0}
\put(20,76){0}

\put(45,101){$dA$-matrix}

\put(51,76){-1}
\put(52,72){1}
\put(51,68){-1}
\put(52,64){1}

\put(-2,46){$A$-matrix}
\put(1,21){-1}
\put(2,17){1}
\put(1,13){-1}
\put(2,9){1}
%\put(3,32){0}
\put(8,17){1}
\put(7,13){-1}

\put(15,-3){(I)}

\put(23,46){$dA$-matrix}		

\put(32,17){0}
\put(32,13){0}

\put(48,46){$A$-matrix}
\put(51,21){-1}
\put(52,17){1}
\put(51,13){-1}
\put(52,9){1}
%\put(3,32){0}
\put(58,17){0}
\put(58,13){0}

\put(65,-3){(II)}

\put(72,46){$dA$-matrix}

\put(82,17){-1}
\put(83,13){1}

%\put(3,20){0}

\end{overpic}
\caption{Case (I) and (II)}
\label{cases}
\end{figure}

\begin{figure}[htb]
\begin{overpic}%[grid,tics=10
[width=0.1\textwidth]
{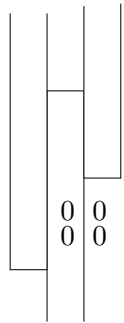}

%\put(0,102){$A$-matrix}
\put(16,32){0}
\put(16,24){0}
\put(26,32){0}
\put(26,24){0}
\end{overpic}
\caption{An $A$-matrix of (II)}
\label{on}
\end{figure}

\subsection{Proof of Corollary~\ref{cor}}
We give a proof of Corollary~\ref{cor}.
Let $K$ be a lens space knot in $S^3$ with parameter $(p,k)$.
If $\Delta_K(t)$ has $\alpha_0=-\alpha_1=\alpha_2=1$, then 
$\Delta_K(t)=\Delta_{T(2,2g+1)}$ holds by Theorem \ref{main}.
Using Theorem 1.15, the lens surgery parameter is $(p,2)$.
The parameter is realized by a $(2,2g+1)$-torus knot.
\hfill$\Box$

\end{document}